\numberwithin{equation}{section}
\newtheorem{theorem}{Theorem}
\begin{document}
\author{Ajai Choudhry}
\title{New Solutions of the Tarry-Escott Problem\\ of degrees 2, 3 and 5}
\date{}
\maketitle

\begin{abstract}In this paper we obtain new parametric ideal solutions of the Tarry-Escott problem of degrees 2, 3 and 5, that is, of the diophantine systems  $\sum_{i=1}^{k+1}x_i^j=\sum_{i=1}^{k+1}y_i^j,\;j=1,\,2,\,\dots,\,k$, when $k$ is 2, 3 or 5. When $k=2$, we obtain   the complete ideal solution in terms of polynomials in six parameters $p, q, r, a, b$ and $c $ such that the common sums $\sigma_j=\sum_{i=1}^3x_i^j=\sum_{i=1}^3y_i^j$ for both $j=1$ and $j=2$ are symmetric functions of  the parameters $p, q, r$ and also symmetric functions of  the parameters $a, b, c$. When $k=3$, we  obtain a solution in terms of polynomials in four parameters $p, q, r$ and $s$ such that  the three common sums $\sigma_j= \sum_{i=1}^4x_i^j=\sum_{i=1}^4y_i^j, j=1, 2, 3$, are symmetric functions of all the  four parameters $p, q, r$ and $s$. When $k=5$, our solution is derived from the solution already obtained when $k=2$, and the common sums, defined as in the cases when $k=2$ or 3, are either 0 or  have properties similar to the case when $k=2$.  
\end{abstract}

Mathematics Subject Classification 2020: 11D09, 11D25, 11D41. 
\medskip

Keywords: equal sums of like powers; Tarry-Escott problem; ideal solutions; triads of integers with equal sums and equal products. 

\section{Introduction}\label{Intro}
The Tarry-Escott problem (written briefly  as TEP)  of degree $k$ consists of finding two distinct sets
of integers $x_1,\,x_2,\,\ldots,\,x_n$, and $y_1,\,y_2,\,\ldots,\,y_n$, such that
\begin{equation}
\sum_{i=1}^nx_i^j=\sum_{i=1}^ny_i^j,\;\;\;j=1,\,2,\,\dots,\,k,
\label{tepnk}
\end{equation}
where $k$ is a given positive integer. It is well-known   that for a non-trivial
solution of (\ref{tepnk}) to exist, we must have $n \geq (k+1)$ \cite[p.\  616]{Do}. 
 Solutions of (\ref{tepnk}) with $n=k+1,$ are known as ideal solutions of the problem. 

It would be recalled that simple solutions of the TEP of degree 2 were first noticed by Goldbach and by Euler in 1750-51.
Ever since then, numerous authors have given parametric ideal solutions  of the TEP when $2 \leq k \leq 7$ and numerical ideal solutions when $k=8, 9$ or 11 (\cite{Ch}, \cite{Cho2}, \cite{Cho3}, \cite{CW}, \cite[pp. 705--713]{Di1}, \cite[pp.\ 33--57]{Gl}). Dickson \cite[pp.\  52, 55--58]{Di2} has given a complete parametric ideal solution of the TEP of degree 2 as well as a method of generating all integer solutions of the TEP of degree 3. The complete solution of the TEP is not known for any value of $k > 3$. 

In this paper,  for the TEP of degrees 2 and 3, we obtain new parametric  ideal solutions that have a remarkable symmetry that is not to be found in any of the known solutions of the TEP.
 
With reference to the diophantine system defined by  \eqref{tepnk}, for each value of the exponent $j$, let $\sigma_j$ denote the common sum of either side of \eqref{tepnk}. For the TEP of degree 2, i.e., for the diophantine system $\sum_{i=1}^3x_i^j=\sum_{i=1}^3y_i^j, j=1, 2$,  we will obtain the complete ideal solution in terms of polynomials in six parameters $p, q, r, a, b$ and $c $ such that both $\sigma_1$ and $\sigma_2$ are nonzero symmetric functions of the three parameters $p, q, r$ and also symmetric functions of the three parameters $a, b, c$. Further, the values of $x_i, y_i, i=1, 2, 3$, will all be expressible as polynomials $\phi(\xi_1, \xi_2, \ldots, \xi_6)$, where $\xi_1, \xi_2, \ldots, \xi_6$, is some permutation of the parameters $p, q, r, a, b$ and $c $. 

For  the TEP of degree 3, i.e.,  for the diophantine system $\sum_{i=1}^4x_i^j=\sum_{i=1}^4y_i^j, j=1, 2, 3$,  we will obtain an   ideal solution in terms of polynomials in four parameters $p, q, r$ and $s$ such that all the common sums $\sigma_j, j=1, 2, 3$, are nonzero symmetric functions of all the four parameters $p, q, r$ and $s$, and the values of $x_i, y_i, i=1,\ldots, 4$, are expressible as polynomials $\phi(\xi_1, \ldots, \xi_4)$, where $\xi_1, \ldots, \xi_4$, is some permutation of the parameters $p, q, r$ and $s $. 

For  the TEP of degree 5, we will derive an ideal solution in terms of six parameters $p, q, r, a, b$ and $c $ using the solution already obtained for the TEP of degree 2. In this case, while the common sums $\sigma_j, j=1, 3, 5$, are all zero,   the nonzero sums $\sigma_2$ and $\sigma_4$ are symmetric functions of $p, q$ and $ r$, as well as symmetric functions of $a, b$ and $c$.

\section{Ideal  solutions of the Tarry-Escott problem of degrees 2, 3 and 5}\label{TEP235}

\subsection{A   parametric ideal solution of the TEP of degree 2}\label{onlyTEP2} 

\begin{theorem}\label{ThTEP2}A parametric solution of the simultaneous diophantine equations,
\begin{equation}
x_1^r+x_2^r+x_3^r=y_1^r+y_2^r+y_3^r, \quad r=1, 2, \label{TEPdeg2}
\end{equation}
is given in terms of six arbitrary parameters $p, q, r, a, b$, and $ c$  by
\begin{equation}
\begin{aligned}
x_1&=\phi(p, q, r, a, b, c), \;\;&x_2&=\phi(p, q, r, b, c, a),\;\;&x_3&=\phi(p, q, r, c, a,b),\\
y_1&=\phi(p, q, r, a, c, b), \;\;&y_2&=\phi(p, q, r, c, b, a),\;\;&y_3&=\phi(p, q, r, b, a,c),
\end{aligned}
\label{solTEP2}
\end{equation}
where $\phi(f, g, h, u, v, w)=fu+gv+hw$. All integer solutions of the simultaneous diophantine equations \eqref{TEPdeg2} may be generated by taking scalar multiples of the solution given by \eqref{solTEP2}.
\end{theorem}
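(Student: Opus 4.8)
The plan is to prove the two assertions separately: first that \eqref{solTEP2} satisfies \eqref{TEPdeg2}, and then that it is complete. For the first assertion I would avoid direct expansion and argue by symmetry. Writing $F_j=\sum_{i=1}^{3}x_i^{j}-\sum_{i=1}^{3}y_i^{j}$ as a polynomial in $p,q,r,a,b,c$, I would observe that interchanging any two of $a,b,c$ carries the multiset $\{x_1,x_2,x_3\}$ onto $\{y_1,y_2,y_3\}$; for instance the transposition $b\leftrightarrow c$ sends $x_i\mapsto y_i$. Hence $F_j$ is \emph{alternating} in $a,b,c$, so it is divisible by $(a-b)(b-c)(c-a)$. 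Since each $x_i,y_i$ is homogeneous of degree $1$ in $a,b,c$, the polynomial $F_j$ is homogeneous of degree $j$ in $a,b,c$; for $j=1$ and $j=2$ this degree is below $3$, and a nonzero alternating polynomial in three variables has degree at least $3$, so $F_1=F_2=0$ identically, which is exactly \eqref{TEPdeg2}. The same reasoning, now using alternation in $p,q,r$ as well, gives $F_3=\kappa\,(a-b)(b-c)(c-a)(p-q)(q-r)(r-p)$ for a constant $\kappa$, which explains why the construction is special to degree $2$; it also records the common sums $\sigma_1=(p+q+r)(a+b+c)$ and $\sigma_2=(p^2+q^2+r^2)(a^2+b^2+c^2)+2(pq+qr+rp)(ab+bc+ca)$, manifestly symmetric in $p,q,r$ and in $a,b,c$.

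For completeness it is enough to argue over $\mathbb{Q}$, since every integer solution is rational and every rational solution clears to an integer solution after multiplication by a common scalar; I would therefore show that every rational point of the solution variety is the image of \eqref{solTEP2} for suitable rational parameters. Let $V\subset\mathbb{A}^{6}$ be the locus of the two homogeneous equations. The first is a hyperplane and the second the quadric $\sum x_i^{2}=\sum y_i^{2}$, whose form has rank $6$; restricting to the hyperplane leaves rank at least $4$, so $V$ is an irreducible $4$-fold. I would then verify that $\Phi\colon(p,q,r,a,b,c)\mapsto(x_i,y_i)$ is dominant onto $V$ by computing its generic fibre: $\Phi$ is invariant under the torus $(p,q,r,a,b,c)\mapsto(\lambda p,\lambda q,\lambda r,\lambda^{-1}a,\lambda^{-1}b,\lambda^{-1}c)$ and under a one-parameter family of simultaneous translations of $(p,q,r)$ and $(a,b,c)$ that fixes every value, so the fibre is $2$-dimensional and the image is dense in $V$.

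To turn density into an actual surjection I would invert $\Phi$ explicitly, using the fact that every difference $x_i-y_j$ factors as a difference of two of $p,q,r$ times a difference of two of $a,b,c$. Concretely, the three diagonal differences give $(x_1-y_1,\,x_2-y_2,\,x_3-y_3)=(b-c)\,(q-r,\,p-q,\,r-p)$, while the two remaining cyclic batches $(x_i-y_{i+1})$ and $(x_i-y_{i+2})$ yield the factors $(c-a)$ and $(a-b)$ against cyclic shifts of the same vector. From these nine quantities one recovers the ratios $(q-r):(r-p):(p-q)$ and $(b-c):(c-a):(a-b)$, hence $(p,q,r)$ and $(a,b,c)$ each up to one additive and one multiplicative constant; the residual freedom is then pinned down by matching $\sigma_1$ together with a single value $x_1$, yielding rational parameters.

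The main obstacle is this last inversion step. One must confirm that the parameters extracted from the differences actually reproduce all six values, and not merely their pairwise differences, which amounts to checking the compatibility of the recovered data with the two defining equations. Equally, the factorizations degenerate when some difference vanishes: when $a,b,c$ (or $p,q,r$) fail to be distinct, and in the trivial solutions where $\{y_i\}$ is a permutation of $\{x_i\}$. These loci are not genuine exceptions, but realizing them as non-generic specializations of \eqref{solTEP2}, rather than as points missed by $\Phi$, is the delicate part of the argument and is where I would expect to spend most of the effort.
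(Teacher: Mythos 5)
Your verification half is correct and is a genuinely different (and more conceptual) route than the paper's. You observe that every transposition of $a,b,c$ carries $\{x_1,x_2,x_3\}$ onto $\{y_1,y_2,y_3\}$, so $F_j=\sum x_i^j-\sum y_i^j$ is alternating in $a,b,c$ and hence divisible by $(a-b)(b-c)(c-a)$; since $F_j$ is homogeneous of degree $j\le 2$ in $a,b,c$, it vanishes. This is a clean replacement for the paper's direct computation of $\sigma_1$ and $\sigma_2$ (which the paper does mainly to exhibit the symmetric form of the common sums), and your remark that $F_3$ must equal a constant times the product of the two Vandermonde determinants correctly explains why the construction stops at degree $2$.

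The completeness half, however, has a genuine gap, and you have located it yourself: the entire content of the second assertion of the theorem is the surjectivity statement, and your proposal ends exactly where that statement would have to be proved. Dominance of $\Phi$ onto the solution variety $V$ gives only a dense image, and density is strictly weaker than the claim that \emph{every} integer solution is a scalar multiple of a specialization of \eqref{solTEP2}. Your plan to invert $\Phi$ via the factorizations $x_i-y_j=(\text{difference of }p,q,r)\cdot(\text{difference of }a,b,c)$ is promising as far as it goes, but it recovers $(p,q,r)$ and $(a,b,c)$ only up to additive and multiplicative constants, requires a compatibility check that the reconstructed parameters reproduce the actual values and not merely their differences, and breaks down on the degenerate loci (repeated parameters, trivial solutions where $\{y_i\}$ permutes $\{x_i\}$). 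You explicitly defer all of this as ``where I would expect to spend most of the effort,'' so the proof is not complete. The paper avoids every one of these difficulties by a short reduction: it quotes Dickson's known complete five-parameter solution of \eqref{TEPdeg2} and exhibits an explicit specialization $a=D$, $b=G$, $c=0$, $p=AD+AG+C$, $q=C$, $r=BD+BG+C$ under which \eqref{solTEP2} becomes $(D+G)$ times Dickson's solution; homogeneity then lets the scalar $D+G$ be absorbed. If you want to keep your self-contained geometric approach, you must actually carry out the inversion and treat the degenerate strata; otherwise the efficient fix is to adopt the paper's reduction to a known complete parametrization.
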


\begin{proof}When $x_i, y_i, i=1, 2, 3$, are defined by \eqref{solTEP2}, direct computation shows that
\begin{equation}
\begin{aligned}
\sum_{i=1}^3x_i&=\sum_{i=1}^3y_i&=&(p + q + r)(a + b + c), \\
\sum_{i=1}^3x_i^2&=\sum_{i=1}^3y_i^2&=&(p^2+q^2+r^2)(a^2 + b^2 + c^2)\\
&& & \;\;+2(pq+qr+rp)(ab+bc+ca).
\end{aligned}
\end{equation}

This proves that \eqref{solTEP2} gives a solution of the  simultaneous equations \eqref{TEPdeg2} such that the common sums $\sigma_1$ and $\sigma_2$ are  symmetric functions of  the parameters $p, q, r$, and also of  the parameters $a, b$ and $c$. 

To show that the  solution  given by \eqref{solTEP2} generates all integer solutions of the  simultaneous equations \eqref{TEPdeg2}, we will use the  following complete solution of these simultaneous equations given by Dickson \cite[p.\ 52]{Di2}:
\begin{equation}
\begin{aligned}
x_1&=AD+C, \quad &x_2&=AG+BD+C, \quad & x_3&=BG+C, \\
y_1&=AD+BG+C, & y_2&=BD+C, &y_3&=AG+C,
\end{aligned}
\label{Dicksonsol}
\end{equation}
where $A, B, C, D$ and $G$ are arbitrary parameters. 

In our solution of the simultaneous equations \eqref{TEPdeg2} given by \eqref{solTEP2}, we take $a=D,\, b=G,\, c=0,\, p = AD + AG + C,\,  q = C, \, r = BD + BG + C$,  when our solution may be written as follows:
\begin{equation}
\begin{aligned}
x_1 &= (D + G)(AD + C), \\
x_2 &= (D + G)(AG + BD + C), \\
x_3 &= (D + G)(BG + C),\\
 y_1 &= (D + G)(AD + BG + C), \\
y_2 &= (D + G)(BD + C), \\
y_3 &= (D + G)(AG + C). 
\end{aligned}
\label{solTEP2spl}
\end{equation}

The values of $x_i, y_i, i=1, 2, 3$, given by \eqref{solTEP2spl} have a common factor $D+G$, and since both the equations \eqref{TEPdeg2} are homogeneous, the common factor $D+G$  may be removed by appropriate scaling,  
and then the solution \eqref{solTEP2spl} coincides exactly with the complete  solution \eqref{Dicksonsol} of the equations \eqref{TEPdeg2} given by Dickson. It follows that the solution of the simultaneous equations \eqref{TEPdeg2} given by \eqref{solTEP2}  generates all integer solutions of these equations.
 \end{proof}

\subsection{A   parametric ideal solution of the TEP of degree 3}\label{onlyTEP3}
We will first give a theorem that gives, in parametric terms, two triads of integers with equal sums and equal products. We note that two complete solutions of the problem of finding two triads of integers with equal sums and equal products have been given independently by Choudhry \cite{Ch} and by Kelly \cite{Ke}. While the solution of this problem given  below in Theorem \ref{Thtriads} is not complete, it is much simpler and  is noteworthy for its symmetry, and   we will use it to obtain a parametric solution of the TEP of degree 3.

\begin{theorem}\label{Thtriads} A parametric solution of the diophantine system,
\begin{align}
X_1^2+X_2^2+X_3^2&=Y_1^2+Y_2^2+Y_3^2, \label{eqsumsq}\\
X_1X_2X_3&=Y_1Y_2Y_3, \label{eqprod}
\end{align}
is given  in terms of four arbitrary parameters $p, q, r$, and $ s$  by
\begin{equation}
\begin{aligned}
X_1&=\phi(p, q, r, s), \;\;&X_2&=\phi(p, r, s, q),\;\;&X_3&=\phi(p, s, q, r),\\
Y_1&=\phi(p, q, s, r), \;\;&Y_2&=\phi(p, r, q, s),\;\;&Y_3&=\phi(p, s, r, q),
\end{aligned}
\label{soltriads}
\end{equation}
where $\phi(a, b, c, d)=(ab+cd)(ac-bd)$.
\end{theorem}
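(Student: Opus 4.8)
The plan is to exploit a hidden product structure in $\phi$. Expanding the definition gives $\phi(p,b,c,d)=(pb+cd)(pc-bd)$, so each of the six quantities in \eqref{soltriads} is a product of a ``sum'' factor and a ``difference'' factor built from the three ways of splitting $\{p,q,r,s\}$ into two pairs. Accordingly I would introduce
\begin{equation}
\begin{aligned}
S_1&=pq+rs, & S_2&=pr+qs, & S_3&=ps+qr,\\
T_1&=pq-rs, & T_2&=pr-qs, & T_3&=ps-qr,
\end{aligned}
\end{equation}
and then check directly from \eqref{soltriads} that
\begin{equation}
\begin{aligned}
X_1&=S_1T_2, & X_2&=S_2T_3, & X_3&=S_3T_1,\\
Y_1&=S_1T_3, & Y_2&=S_2T_1, & Y_3&=S_3T_2.
\end{aligned}
\end{equation}
The only care needed here is the bookkeeping of which difference factor $T_j$ accompanies each sum factor $S_i$ (the $X_i$ pair $S_i$ with $T_{i+1}$, the $Y_i$ pair $S_i$ with $T_{i-1}$, indices read cyclically). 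This pairing is the crux of the whole argument, and once it is pinned down both equations follow almost mechanically.

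For the product equation \eqref{eqprod} the factorization makes the claim immediate: each of $S_1,S_2,S_3$ and each of $T_1,T_2,T_3$ occurs exactly once among $X_1,X_2,X_3$, and likewise exactly once among $Y_1,Y_2,Y_3$, so that
\begin{equation}
X_1X_2X_3=S_1S_2S_3\,T_1T_2T_3=Y_1Y_2Y_3,
\end{equation}
with no computation beyond rearranging factors.

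For the sum-of-squares equation \eqref{eqsumsq} I would first reduce it, using the factorization, to the identity
\begin{equation}
S_1^2(T_2^2-T_3^2)+S_2^2(T_3^2-T_1^2)+S_3^2(T_1^2-T_2^2)=0.
\end{equation}
The key observation is that the three sum/difference pairs satisfy the uniform relation
\begin{equation}
S_1^2-T_1^2=S_2^2-T_2^2=S_3^2-T_3^2=4pqrs,
\end{equation}
since, for instance, $S_1^2-T_1^2=(S_1+T_1)(S_1-T_1)=(2pq)(2rs)$, and similarly for the other two indices. Substituting $S_i^2=T_i^2+4pqrs$ into the displayed identity, the common term $4pqrs$ multiplies $(T_2^2-T_3^2)+(T_3^2-T_1^2)+(T_1^2-T_2^2)=0$ and so drops out, while the surviving terms $T_1^2(T_2^2-T_3^2)+T_2^2(T_3^2-T_1^2)+T_3^2(T_1^2-T_2^2)$ cancel term by term. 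This establishes \eqref{eqsumsq}, and together with \eqref{eqprod} the theorem follows. The main obstacle is therefore not any deep computation but rather spotting the pairing in the second display and the index-independent relation $S_i^2-T_i^2=4pqrs$; after these two observations the proof is essentially bookkeeping.
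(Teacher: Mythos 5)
Your proof is correct, and it rests on exactly the same structure as the paper's: the factorization of each $X_i,Y_i$ into a sum factor $pq+rs,\,pr+qs,\,ps+qr$ times a difference factor $pq-rs,\,pr-qs,\,ps-qr$, with the identity $S_i^2-T_i^2=4pqrs$ doing the work for the sum-of-squares equation (the paper uses this same identity implicitly when it solves its ansatz $X_1=f(pq+rs)$, etc., to get $f=pr-qs$, $g=ps-qr$, $h=pq-rs$). The only difference is presentational: the paper derives the difference factors from an undetermined-coefficients setup, whereas you verify the given formulas directly, which is entirely adequate for the statement as posed.
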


\begin{proof} We begin by writing
\begin{equation}
\begin{aligned}
X_1&= f(pq + rs), \quad &X_2&= g(pr + qs), \quad &X_3&= h(ps + qr), \\
Y_1&= g(pq + rs), \quad &Y_2&= h(pr + qs), \quad &Y_3&= f(ps + qr),
\end{aligned} \label{subsxy}
\end{equation}
where $f, g, h,  p, q, r$, and $ s$, are arbitrary parameters. Now \eqref{eqprod} is identically satisfied, while \eqref{eqsumsq} reduces to

\begin{equation}
(pq+rs)^2(f^2-g^2)+(pr+qs)^2(g^2-h^2)+(ps+rq)^2(h^2-f^2)=0, \label{eqsumsq1}
\end{equation}
or, equivalently,
\begin{equation}
\{(pq+rs)^2-(ps+rq)^2\}(f^2-h^2)+\{(pr+qs)^2-(pq+rs)^2\}(g^2-h^2)=0,\label{eqsumsq2}
\end{equation}
and hence we may take,
\begin{equation}
\begin{aligned}
f^2-h^2&=(pr+qs)^2-(pq+rs)^2\\
g^2-h^2&=-\{(pq+rs)^2-(ps+rq)^2\}.
\end{aligned} 
\label{eqsumsq3}
\end{equation}
If we now take $ h=pq-rs$, it follows from  \eqref{eqsumsq3} that 
\begin{equation}
f = pr - qs, \quad g = ps - qr,
\end{equation}
and, on substituting the values of $f, g, h$ in \eqref{subsxy}, we get the solution \eqref{soltriads} stated in the theorem.
\end{proof}

\begin{theorem}\label{ThTEP3} A parametric solution of the simultaneous diophantine equations,
\begin{align}
x_1+x_2+x_3+x_4&=y_1+y_2+y_3+y_4, \label{TEPdeg3a}\\
x_1^2+x_2^2+x_3^2+x_4^2&=y_1^2+y_2^2+y_3^2+y_4^2, \label{TEPdeg3b} \\
x_1^3+x_2^3+x_3^3+x_4^3&=y_1^3+y_2^3+y_3^3+y_4^3, \label{TEPdeg3c}
\end{align}
is given in terms of four arbitrary parameters $p, q, r$, and $ s$   by
\begin{equation}
\begin{aligned}
x_1&=\phi(p, q, r, s), \quad&x_2&=\phi(p, r, s, q),\\
x_3&=\phi(p, s, q, r),  \quad&x_4&=\phi(q, r, p, s),\\
y_1&=\phi(p, q, s, r), \quad&y_2&=\phi(p, r, q, s),\\
y_3&=\phi(p, s, r, q), \quad&y_4&=\phi(q, s, p, r),
\end{aligned}
\label{solTEP3}
\end{equation}
where 
\begin{multline}
\phi(a, b, c, d)=a^2bc + abc^2 + ac^2d + acd^2 + b^2cd + bc^2d.
\end{multline}
\end{theorem}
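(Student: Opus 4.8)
The plan is to reduce the three required identities \eqref{TEPdeg3a}--\eqref{TEPdeg3c} to a single structural fact: for each $j\in\{1,2,3\}$ the power sum $\sigma_j=\sum_{i=1}^4 x_i^{\,j}$, formed from the $x_i$ in \eqref{solTEP3}, is a \emph{symmetric} function of the four parameters $p,q,r,s$. Granting this, the $y$-side costs nothing. Inspecting the argument strings in \eqref{solTEP3} shows that the transposition $r\leftrightarrow s$ of the parameters sends $(x_1,x_2,x_3,x_4)$ to $(y_1,y_3,y_2,y_4)$; that is, the substitution $r\leftrightarrow s$ carries the multiset $\{x_i\}$ onto $\{y_i\}$. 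Hence $\sum_i y_i^{\,j}=\sigma_j(p,q,s,r)$, and if $\sigma_j$ is symmetric this equals $\sigma_j(p,q,r,s)=\sum_i x_i^{\,j}$, which is exactly \eqref{TEPdeg3a}--\eqref{TEPdeg3c}. The argument is not circular: the symmetry of $\sigma_j$ is to be proved from the $x$-formulas alone, and only afterwards is the parameter swap invoked.

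The core computation is therefore to expand $\sigma_1,\sigma_2,\sigma_3$ and exhibit each in the ring of symmetric polynomials, say in the elementary symmetric functions $e_1,\dots,e_4$ of $p,q,r,s$. For $j=1$ a direct expansion gives the clean form $\sigma_1=2e_1e_3-8e_4$, which is manifestly symmetric; I expect $\sigma_2$ and $\sigma_3$ to come out as homogeneous symmetric polynomials of degrees $8$ and $12$ respectively, and displaying them in the $e_i$ (or power-sum) basis both proves the symmetry and substantiates the claim that all three common sums are symmetric in $p,q,r,s$.

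To keep the expansion manageable I would exploit the partial symmetry already visible in \eqref{solTEP3}: the $3$-cycle $q\to r\to s\to q$ permutes $x_1,x_2,x_3$ cyclically (while moving $x_4$ outside the set), so $x_1^{\,j}+x_2^{\,j}+x_3^{\,j}$ is automatically invariant under that cycle and need be computed on one orbit representative and symmetrized, with $x_4$ treated separately. This is where Theorem \ref{Thtriads} enters at the level of motivation: the ansatz \eqref{solTEP3} is patterned on the triad solution \eqref{soltriads}—the first three $x_i$ and $y_i$ use exactly the argument permutations of $X_i$ and $Y_i$—so the symmetry that made the triads (with their equal sums of squares and equal products) work is expected to reappear here; one may either trace this provenance or simply verify the collapse directly.

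The main obstacle is purely the size of the $j=3$ case: $\sigma_3$ is a degree-$12$ form in four variables, and confirming that its raw expansion reorganizes into a symmetric polynomial is the one genuinely laborious step. Everything else—the reduction of the $y$-side to the $x$-side, and the $j=1,2$ verifications—is short. I would therefore concentrate the effort on a disciplined bookkeeping of $\sigma_3$, grouping terms by the cyclic orbit of $x_1,x_2,x_3$ and by monomial type, since once $\sigma_3$ is shown to be symmetric the theorem follows immediately from the parameter-swap observation of the first paragraph.
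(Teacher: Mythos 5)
Your argument is correct but proceeds by a genuinely different route from the paper's. You \emph{verify} the formulas: the transposition $r\leftrightarrow s$ sends $\phi(p,q,r,s),\,\phi(p,r,s,q),\,\phi(p,s,q,r),\,\phi(q,r,p,s)$ to $\phi(p,q,s,r),\,\phi(p,s,r,q),\,\phi(p,r,q,s),\,\phi(q,s,p,r)$, i.e.\ carries the multiset $\{x_1,x_2,x_3,x_4\}$ onto $\{y_1,y_3,y_2,y_4\}$, so \eqref{TEPdeg3a}--\eqref{TEPdeg3c} follow once each $\sigma_j=\sum_i x_i^j$ is invariant under that transposition, which you deduce from full symmetry in $p,q,r,s$. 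The paper instead \emph{derives} \eqref{solTEP3}: it substitutes $x_1=X_1-X_2-X_3,\,\dots,\,x_4=X_1+X_2+X_3$ (and similarly for the $y_i$), which makes \eqref{TEPdeg3a} automatic and turns \eqref{TEPdeg3b} and \eqref{TEPdeg3c} into the equal-sums-of-squares and equal-products system solved in Theorem~\ref{Thtriads}, and then applies the affine map $t\mapsto t/2+K$ with a suitable symmetric $K$ to arrive at $\phi$. Your route is self-contained and proves the theorem together with the paper's subsequent remark (the symmetry of $\sigma_1,\sigma_2,\sigma_3$ in $p,q,r,s$) in one stroke, but it concentrates all the labour in the degree-$8$ and degree-$12$ expansions of $\sigma_2$ and $\sigma_3$, which you describe rather than carry out; that computation does close (the paper records $\sigma_1=2e_1e_3-8e_4$, agreeing with your value, and lists the $e_i$-expressions for $\sigma_2$ and $\sigma_3$), so what remains on your side is bookkeeping rather than a missing idea. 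Conversely, the paper's route costs little computation for the theorem itself but must still perform the symmetric-function expansion separately to justify the remark. One small correction: the $3$-cycle $q\to r\to s\to q$ does not move $x_4$ outside the set --- it fixes it, since $\phi(r,s,p,q)=\phi(q,r,p,s)$ identically --- so the whole of $\sigma_j$, not merely $x_1^j+x_2^j+x_3^j$, is invariant under that cycle, and only invariance under a single odd permutation of $q,r,s$ remains to be checked.
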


\begin{proof} To solve the simultaneous equations \eqref{TEPdeg3a}, \eqref{TEPdeg3b} and \eqref{TEPdeg3c}, we write,
\begin{equation}
\begin{aligned}
x_1&=X_1-X_2-X_3,  &x_2&=-X_1+X_2-X_3, \\
x_3&=-X_1-X_2+X_3,  &x_4&=X_1+X_2+X_3, \\
y_1&=Y_1-Y_2-Y_3,  &y_2&=-Y_1+Y_2-Y_3,  \\
y_3&=-Y_1-Y_2+Y_3,  &y_4&=Y_1+Y_2+Y_3,
\end{aligned}
\end{equation}
when equation \eqref{TEPdeg3a} is identically satisfied while equations \eqref{TEPdeg3b} and \eqref{TEPdeg3c} reduce to equations \eqref{eqsumsq} and \eqref{eqprod} respectively. Using the solution \eqref{soltriads} of the simultaneous equations \eqref{eqsumsq} and \eqref{eqprod} given by Theorem \ref{Thtriads}, we obtain a solution of the simultaneous equations \eqref{TEPdeg3a}, \eqref{TEPdeg3b} and \eqref{TEPdeg3c} in terms of four arbitrary parameters $p, q, r$ and $s$. This solution may be written as
\begin{equation}
 \begin{aligned}
x_1&=\psi(p, q, r, s), \quad &x_2&=\psi(p, r, s, q),\\
x_3&=\psi(p, s, q, r), \quad &x_4&=\psi(q, r, p, s),\\
y_1&=\psi(p, q, s, r), \quad &y_2&=\psi(p, r, q, s), \\
y_3&=\psi(p, s, r, q), \quad &y_4&=\psi(q, s, p, r),
\end{aligned}
\label{solTEP3interim}
\end{equation}
where
\begin{equation}
\begin{aligned}
\psi(a, b, c, d)&=(bc - bd - cd)a^2 - (ac + ad - cd)b^2 \\
& \quad \quad + (ab + ad + bd)c^2 - (ab - ac + bc)d^2.
\end{aligned}
\label{defpsi}
\end{equation}

We will now use a well-known theorem according to which  if $x_i, y_i, i=1, \ldots, n$, is a solution of the diophantine system \eqref{tepnk}, then for arbitrary rational numbers $M$ and $K$, another solution of the diophantine system \eqref{tepnk} is given by $Mx_i+K, My_i+K, i=1, \ldots, n$. 

On taking $M=1/2$ and 
\begin{equation*}
K=\{p^2(qr + qs + rs) + q^2(pr + ps + rs) + r^2(pq + ps + qs)+ s^2(pq + pr + qr)\}/2,
\end{equation*}
and applying the aforesaid theorem to the solution \eqref{solTEP3interim}, we get the solution of the simultaneous equations \eqref{TEPdeg3a}, \eqref{TEPdeg3b} and \eqref{TEPdeg3c} stated in Theorem \ref{ThTEP3}.
\end{proof}

For the solution of the simultaneous equations \eqref{TEPdeg3a}, \eqref{TEPdeg3b} and \eqref{TEPdeg3c} given by \eqref{solTEP3}, we note that the values of the common sums $\sigma_j, j=1, 2, 3$, may be written as,
\begin{equation}
\begin{aligned}
\sigma_1&=2e_1e_3 - 8e_4,\\
\sigma_2&=-2e_1^2e_2e_4 + 2e_1^2e_3^2 - 8e_1e_3e_4 + 4e_2^2e_4 - 2e_2e_3^2, \\
\sigma_3&=3e_1^4e_4^2 - 3e_1^3e_2e_3e_4 + 2e_1^3e_3^3 - 12e_1^2e_3^2e_4 + 6e_1e_2^2e_3e_4 \\
& \quad- 3e_1e_2e_3^3 - 24e_2^2e_4^2 + 24e_2e_3^2e_4 - 3e_3^4 + 64e_4^3,
\end{aligned}
\end{equation}
where $e_i, i=1, \ldots, 4$, are the elementary symmetric functions defined by
\[
\begin{aligned}
e_1&=p+q+r+s, \quad &e_2&=pq + pr + ps + qr + qs + rs,\\
 e_3&=pqr + pqs + prs + qrs, \quad &e_4&=pqrs. 
\end{aligned}
\]
 
This shows that the common sums $\sigma_j, j=1, 2, 3$, are symmetric functions of the parameters $p, q, r$ and $s$.

\subsection{A parametric ideal solution of the TEP of degree 5}\label{onlyTEP5}
We will first give in Theorem \ref{Theqsums124} a parametric solution of the diophantine system $\sum_{i=1}^3x_i^j=\sum_{i=1}^3y_i^j, j=1,\,2,\,4$, and we will use this solution in Theorem \ref{ThTEP5} to obtain ideal solutions of the TEP of degree 5.

\begin{theorem}\label{Theqsums124} 
A parametric solution of the simultaneous diophantine equations,
\begin{equation}
\sum_{i=1}^3x_i^j=\sum_{i=1}^3y_i^j,\;\;\;j=1, 2, 4,
\label{eqsums124}
\end{equation}
is given in terms of six arbitrary parameters $p, q, r, a, b$, and $ c$  by
\begin{equation}
\begin{aligned}
x_1&=\psi(p, q, r, a, b, c), \;\;&x_2&=\psi(p, q, r, b, c, a),\;\;&x_3&=\psi(p, q, r, c, a,b),\\
y_1&=\psi(p, q, r, a, c, b), \;\;&y_2&=\psi(p, q, r, c, b, a),\;\;&y_3&=\psi(p, q, r, b, a,c),
\end{aligned}
\label{soleqsums124}
\end{equation}
where $\psi(f, g, h, u, v, w)=f(v - w) + g(w-u) + h(u - v)$.
\end{theorem}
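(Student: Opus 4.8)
The plan is to expose a hidden $S_3$-symmetry of the six quantities. Writing $v_1=b-c$, $v_2=c-a$, $v_3=a-b$, which satisfy the identity $v_1+v_2+v_3=0$, and setting $L_\sigma=p\,v_{\sigma(1)}+q\,v_{\sigma(2)}+r\,v_{\sigma(3)}$ for each permutation $\sigma$ of $\{1,2,3\}$, I would first unwind the definition of $\psi$ to verify that $x_1,x_2,x_3$ are precisely the three values $L_\sigma$ with $\sigma$ even, while $y_1,y_2,y_3$ are the three values $-L_\tau$ with $\tau$ odd. This matching is the only place where the particular arrangement of $a,b,c$ in \eqref{soleqsums124} is used.

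Since $y_i^{\,j}=(-1)^j L_\tau^{\,j}$, the difference $\sum_i x_i^{\,j}-\sum_i y_i^{\,j}$ then reduces to the full sum $S_j=\sum_\sigma L_\sigma^{\,j}$ when $j$ is odd, and to the alternating sum $D_j=\sum_\sigma\operatorname{sgn}(\sigma)L_\sigma^{\,j}$ when $j$ is even. Thus it suffices to prove $S_1=0$, $D_2=0$, and $D_4=0$. The case $j=1$ is immediate: each $v_m$ occupies each slot in exactly two of the six linear forms, so $S_1=2(p+q+r)(v_1+v_2+v_3)=0$ by the relation above.

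For the even cases I would expand $L_\sigma^{\,j}$ by the multinomial theorem. Collecting the coefficient of a monomial $p^aq^br^c$ with $a+b+c=j$ turns $D_j$ into a sum of terms $\binom{j}{a,b,c}\,p^aq^br^c\,A(a,b,c)$, where $A(a,b,c)=\sum_\sigma\operatorname{sgn}(\sigma)\,v_{\sigma(1)}^{a}v_{\sigma(2)}^{b}v_{\sigma(3)}^{c}$ is the $3\times3$ determinant whose three rows carry exponents $e=a,b,c$, namely $(v_1^{e},v_2^{e},v_3^{e})$. Such a determinant vanishes whenever two of the exponents $a,b,c$ coincide (two equal rows). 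For $j=2$ every way of writing $2$ as a sum of three nonnegative parts repeats an exponent, so every $A(a,b,c)=0$ and $D_2=0$ holds identically, independently of the constraint.

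The case $j=4$ is the crux, and I expect it to be the main obstacle. The only way to write $4$ as a sum of three \emph{distinct} nonnegative integers is $0+1+3$, so only the monomials with exponent multiset $\{0,1,3\}$ survive, and $D_4$ collapses to a single generalized Vandermonde determinant $\Delta$ (exponents $0,1,3$) multiplied by an alternating polynomial in $p,q,r$. The decisive observation is that $\Delta$, being alternating in $v_1,v_2,v_3$, is divisible by the ordinary Vandermonde $\prod_{i<j}(v_i-v_j)$; since $\deg\Delta=4$, the cofactor is a symmetric linear form, hence a constant multiple of $v_1+v_2+v_3$. The constraint $v_1+v_2+v_3=0$ therefore forces $\Delta=0$, whence $D_4=0$. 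The same bookkeeping explains the exclusion of $j=3$: there $3=0+1+2$ produces the ordinary Vandermonde, which does \emph{not} vanish under the constraint, so $D_3\neq0$ in general. The heart of the theorem is thus the realization that the single linear relation $(b-c)+(c-a)+(a-b)=0$ annihilates exactly the degree-$4$ alternating sum while sparing the degree-$3$ one.
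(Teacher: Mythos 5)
Your argument is correct, and it takes a genuinely different route from the paper. The paper simply observes that \eqref{soleqsums124} is obtained from the degree-2 solution \eqref{solTEP2} by the substitution $p\mapsto r-q$, $q\mapsto p-r$, $r\mapsto q-p$, and then verifies the three power-sum identities by direct computation, which also yields the explicit closed forms $\sigma_2=2QR$ and $\sigma_4=2Q^2R^2$ (with $Q=p^2+q^2+r^2-pq-qr-rp$ and $R$ its analogue in $a,b,c$) that the paper needs later to assert the symmetry of the common sums in Theorem~\ref{ThTEP5}. Your identification of the six values with the forms $\pm L_\sigma$ indexed by $S_3$ checks out ($x_1,x_2,x_3$ are $L_\sigma$ for $\sigma$ even, $y_1,y_2,y_3$ are $-L_\tau$ for $\tau$ odd), the reduction to $S_1=0$, $D_2=0$, $D_4=0$ is sound, and the generalized-Vandermonde analysis correctly isolates the exponent multiset $\{0,1,3\}$ for $j=4$ and shows the Schur cofactor is a multiple of $v_1+v_2+v_3$, which the constraint kills. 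Your approach is more conceptual: it explains \emph{why} the single relation $(b-c)+(c-a)+(a-b)=0$ is exactly what makes $j=4$ work (and shows $D_2=0$ holds with no constraint at all, recovering Theorem~\ref{ThTEP2} as the unconstrained case), at the cost of not producing the explicit product formulas \eqref{csumsTEP5}. One small blemish in a side remark: for odd $j$ your own reduction shows the relevant obstruction is $S_j$, not $D_j$, so the failure at $j=3$ is due to $S_3=2\sum_\sigma L_\sigma^3\neq 0$ in general (e.g.\ $p=1$, $q=r=0$ gives $S_3=6v_1v_2v_3$), rather than to the nonvanishing of $D_3$; this does not affect the proof of the stated theorem, which concerns only $j=1,2,4$.
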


\begin{proof}
In the solution \eqref{solTEP2} of the simultaneous equations \eqref{TEPdeg2}, if we replace $p, q, r$ by $r-q,\; p-r,\; q-p$, respectively, the new values of $x_i, y_i, i=1, 2, 3$, are given by \eqref{soleqsums124}, and with these values of $x_i, y_i$, direct computation gives,
\begin{equation}
\begin{aligned}
\sum_{i=1}^3x_i&=\sum_{i=1}^3y_i&=&0, \\
\sum_{i=1}^3x_i^2&=\sum_{i=1}^3y_i^2&=&2(p^2+q^2+r^2-pq-qr-rp)\\
& & &\quad \times (a^2 + b^2 + c^2-ab-bc-ca)\\
\sum_{i=1}^3x_i^4&=\sum_{i=1}^3y_i^4&=&2(p^2+q^2+r^2-pq-qr-rp)^2\\
& & & \quad  \times (a^2 + b^2 + c^2-ab-bc-ca)^2.
\end{aligned}
\label{csumsTEP5}
\end{equation}
It now follows that \eqref{soleqsums124} gives a parametric solution of the simultaneous diophantine equations \eqref{eqsums124}.
\end{proof}

\begin{theorem}\label{ThTEP5}
A parametric solution of the simultaneous diophantine equations,
\begin{equation}
\sum_{i=1}^6x_i^j=\sum_{i=1}^6y_i^j,\;\;\;j=1,\,2,\,\dots,\,5,
\label{tep5}
\end{equation}
in terms of six arbitrary parameters $p, q, r, a, b$, and $ c$ is given by  
\begin{equation}
 x_4=-x_1,\;\; x_5=-x_2,\;\;  x_6=-x_3, \;\; y_4=-y_1,\;\;  y_5=-y_2,\;\;  y_6=-y_3, 
\label{valxy456}
\end{equation}
and  the values of $x_i, y_i, i=1, 2, 3$, are defined  by \eqref{soleqsums124}. 
\end{theorem}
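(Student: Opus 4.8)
The plan is to split the exponents $j=1,\ldots,5$ into odd and even cases and to exploit the antisymmetric structure imposed by \eqref{valxy456}. First I would observe that the prescription $x_4=-x_1,\ x_5=-x_2,\ x_6=-x_3$ arranges the six $x$-values into three sign-paired couples $\{x_i,-x_i\}$, and likewise for the $y$-values. For any odd exponent $j$ we have $(-x_i)^j=-x_i^j$, so the two members of each couple cancel and $\sum_{i=1}^6 x_i^j=0$; by the identical argument $\sum_{i=1}^6 y_i^j=0$. This settles the cases $j=1,3,5$ immediately, and note that it does so \emph{independently} of the particular values of $x_i,y_i$: only the pairing matters.

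For the even exponents $j=2$ and $j=4$, the parity identity $(-x_i)^j=x_i^j$ gives instead $\sum_{i=1}^6 x_i^j=2\sum_{i=1}^3 x_i^j$, and similarly $\sum_{i=1}^6 y_i^j=2\sum_{i=1}^3 y_i^j$. At this point I would invoke Theorem \ref{Theqsums124}, which guarantees that the values $x_1,x_2,x_3,y_1,y_2,y_3$ defined by \eqref{soleqsums124} satisfy $\sum_{i=1}^3 x_i^j=\sum_{i=1}^3 y_i^j$ for $j=1,2,4$. In particular the equalities hold for $j=2$ and $j=4$, so doubling both sides preserves them and yields $\sum_{i=1}^6 x_i^j=\sum_{i=1}^6 y_i^j$ for these two exponents as well. (Equivalently, one may read off the explicit common values directly from \eqref{csumsTEP5}.)

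Combining the odd and even cases covers all of $j=1,2,3,4,5$, which completes the proof. There is essentially no analytic obstacle here: the entire argument is structural, resting only on the parity behaviour of $t\mapsto t^j$ together with the fact that Theorem \ref{Theqsums124} supplies \emph{exactly} the two degree-$\{2,4\}$ equalities needed for the even powers. The step requiring the most care is simply the bookkeeping observation that $j=1,3,5$ need no separate verification of any deeper relation — the vanishing of both sides makes them trivially equal, so the only substantive input is the degree-$\{1,2,4\}$ theorem applied to the even exponents.
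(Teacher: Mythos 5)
Your proof is correct and follows essentially the same route as the paper: the odd exponents $j=1,3,5$ vanish identically because of the sign-pairing in \eqref{valxy456}, and the even exponents $j=2,4$ reduce (after doubling) to the equalities supplied by Theorem \ref{Theqsums124}. Your write-up merely spells out the parity bookkeeping in more detail than the paper does.
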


\begin{proof}
If we take the values of $x_i, y_i, i=4, 5, 6$, as given by \eqref{valxy456}, the relations \eqref{tep5} are identically satisfied for $k=1, 3 $ and $5$, and the diophantine system \eqref{tep5} reduces to the simultaneous diophantine equations
\begin{equation}
\sum_{i=1}^3x_i^j=\sum_{i=1}^3y_i^j,\;\;\;j=2,\,4.
\label{tep5red}
\end{equation}
Since a solution of the simultaneous equations \eqref{tep5red} is given by \eqref{soleqsums124}, it follows that a solution of the 
simultaneous diophantine equations \eqref{tep5} is as stated in the theorem. 
\end{proof}

It follows from the relations \eqref{valxy456} that for the solution of the TEP of degree 5 given by Theorem \ref{ThTEP5}, the sums $\sigma_j, j=1, 3, 5$, are all zero. Further, it follows from the relations \eqref{csumsTEP5} that the common sums $\sigma_2$ and $\sigma_4$ are symmetric functions of $p, q$ and $ r$, as well as symmetric functions of $a, b$ and $c$.

\section{Concluding remarks}
It would be interesting to find new parametric  solutions of the TEP of degrees $k$ where $k > 3$ such that all the common sums $\sigma_j, j=1, \ldots, k$, are nonzero symmetric functions of the parameters. Such solutions may be of interest even if the solutions are not ideal solutions.

More generally, it would be of interest to find parametric solutions, with similar symmetric properties, of other symmetric  diophantine  systems  of the type $\sum_{i=1}^nx_i^j=\sum_{i=1}^ny_i^j$, where the equality holds for certain values of the exponent $j$ that are not the first $k$ consecutive positive integers as in the case of the TEP.

\noindent Ajai Choudhry, 13/4 A Clay Square, Lucknow - 226001, India.

\noindent E-mail address: ajaic203@yahoo.com

\end{document}